\newtheorem{theorem}{Theorem}[section]
\newtheorem{lemma}[theorem]{Lemma}
\newtheorem{proposition}[theorem]{Proposition}
\newtheorem{definition}[theorem]{Definition}
\newtheorem{assumptions}[theorem]{Assumption}
\DeclareMathOperator{\diff}{d}
\DeclareMathOperator{\lsc}{lsc}
\DeclareMathOperator{\cl}{cl}
\DeclareMathOperator{\interier}{int}
\DeclareMathOperator{\dom}{dom}
\newcommand{\fF}{\frak F}
\newcommand{\fG}{\frak G}
\newcommand{\fS}{\frak S}
\newcommand{\cM}{\mathscr M}
\newcommand{\calF}{\mathcal F}
\newcommand{\calG}{\mathcal G}
\newcommand{\calS}{\mathcal S}
\newcommand{\R}{\mathbb{R}}
\newcommand{\Irn}{\int_{\mathbb R^n}}
\begin{document}

\title{Strong Duality of Linear Optimisation Problems over Measure Spaces}
\author[1]{Raphael Hauser\thanks{Andrew Wiles Building, Radcliffe Observatory Quarter, Woodstock Road, Oxford OX2 6GG,
United Kingdom, hauser@maths.ox.ac.uk}}
\author[2]{Sergey Shahverdyan\thanks{Andrew Wiles Building, Radcliffe Observatory Quarter, Woodstock Road, Oxford OX2 6GG,
United Kingdom, shahverdyan@maths.ox.ac.uk}}
\affil[1]{Mathematical Institute, University of Oxford}
\affil[2]{Mathematical Institute, University of Oxford}
\date{}
\maketitle

%%%%%%%%%%%%%%%%%%%%%%
\section{Introduction}\label{problemFormulation}
%%%%%%%%%%%%%%%%%%%%%%
In \cite{hauser} the authors introduce a general duality result for linear optimisation problems over signed measures with infinitely many constraints in the form of integrals of functions with respect to the decision variables (the measure in question). In this work we present two particular cases of the general duality result for which strong duality holds. In the first case the optimisation problems are over measures with $L^p$ density functions with $1 < p < \infty$. In the second case we consider a semi-infinite optimisation problem where finitely many constraints are given in form of bounds on integrals. The latter case has a particular importance in practice where the model can be applied in robust risk management and model-free option pricing, e.g. \cite{hauser, sergeyRFOP}.\\
In the next section we present the general duality result first introduced in \cite{hauser}. In Section \ref{sec:shapiro} we introduce results on conic linear optimisation problems from \cite{shap}. In Sections \ref{lpDensities} and \ref{semiInf} we use the results from Section \ref{sec:shapiro} to prove duality for two cases described above: measures that have $L^p$ density functions for $1 < p < \infty$ and semi-infinite problems with special structure.

\section{Problem Formulation}\label{problemFormulation}
Let $(\Phi,\fF)$, $(\Gamma,\fG)$ and $(\Sigma,\fS)$ 
be complete measure spaces, and let 
$A:\,\Gamma\times\Phi\rightarrow\R$,
$a:\,\Gamma\rightarrow\R$, 
$B:\,\Sigma\times\Phi\rightarrow\R$, 
$b:\,\Sigma\rightarrow\R$, and 
$c:\,\Phi\rightarrow\R$ 
be bounded measurable functions on these spaces and the corresponding product 
spaces. Let $\cM_{\fF}$, $\cM_{\fG}$ and $\cM_{\fS}$ be the set of signed 
measures with finite variation on $(\Phi,\fF)$, $(\Gamma,\fG)$ and 
$(\Sigma,\fS)$ respectively. 
We now consider the following pair of optimisation problems over $\cM_{\fF}$ and 
$\cM_{\fG}\times\cM_{\fS}$ respectively, which authors show to be duals of each other, 
\begin{align*}
\text{(P')}\quad\sup_{\calF\in\cM_{\fF}}\,&\int_{\Phi}c(x)\diff \calF(x)\\
\text{s.t. }&\int_{\Phi}A(y,x)\diff \calF(x)\leq a(y),\quad(y\in\Gamma),\\
&\int_{\Phi}B(z,x)\diff \calF(x)= b(z),\quad(z\in\Sigma),\\
&\calF\geq 0,
\end{align*}
and 
\begin{align*}
\text{(D')}\quad\inf_{(\calG,\calS)\in\cM_{\fG}\times\cM_{\fS}}
\,&\int_{\Gamma}a(y)\diff \calG(y)+\int_{\Sigma}b(z)\diff\calS(z),\\
\text{s.t. }&\int_{\Gamma}A(y,x)\diff\calG(y)+\int_{\Sigma}
B(z,x)\diff\calS(z)\geq c(x),\quad(x\in\Phi),\\
&\calG\geq 0.
\end{align*}

\begin{theorem}[Weak Duality]\label{weak}
For every (P')-feasible measure $\calF$ and every 
(D')-feasible pair $(\calG,\calS)$ we have 
\begin{equation*}
\int_{\Phi}c(x)\diff\calF(x)\leq \int_{\Gamma}a(y)\diff\calG(y)+
\int_{\Sigma}b(z)\diff\calS(z).
\end{equation*}
\end{theorem}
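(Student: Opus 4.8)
The plan is to establish the asserted inequality by the classical weak-duality chain of inequalities, exactly as in finite-dimensional linear programming, with Fubini's theorem supplying the interchange in the order of integration. Fix an arbitrary (P')-feasible measure $\calF$ and an arbitrary (D')-feasible pair $(\calG,\calS)$; all work is with these fixed.

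First, I would use the (D')-constraint together with $\calF\ge 0$. Since $c(x)\le \int_{\Gamma}A(y,x)\,\diff\calG(y)+\int_{\Sigma}B(z,x)\,\diff\calS(z)$ holds for every $x\in\Phi$ and $\calF$ is a nonnegative measure, integrating this pointwise inequality against $\calF$ preserves its direction and gives
\[
\int_{\Phi}c(x)\,\diff\calF(x)\;\le\;\int_{\Phi}\Bigl(\int_{\Gamma}A(y,x)\,\diff\calG(y)+\int_{\Sigma}B(z,x)\,\diff\calS(z)\Bigr)\diff\calF(x).
\]
Next I would apply Fubini's theorem to each of the two iterated integrals on the right-hand side to swap the order of integration, obtaining
\[
\int_{\Gamma}\Bigl(\int_{\Phi}A(y,x)\,\diff\calF(x)\Bigr)\diff\calG(y)\;+\;\int_{\Sigma}\Bigl(\int_{\Phi}B(z,x)\,\diff\calF(x)\Bigr)\diff\calS(z).
\]
Here I would invoke the standing hypotheses that $A$ and $B$ are bounded and measurable on the respective product spaces and that $\calF,\calG,\calS$ have finite variation, so that the integrands are jointly measurable and the iterated integrals of $|A|$, $|B|$ against the products of the total-variation measures are finite — precisely the hypotheses needed for Fubini.

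Finally I would bring in the two (P')-constraints. Because $\calG\ge 0$, integrating the pointwise inequality $\int_{\Phi}A(y,x)\,\diff\calF(x)\le a(y)$ against $\calG$ yields $\int_{\Gamma}\bigl(\int_{\Phi}A(y,x)\,\diff\calF(x)\bigr)\diff\calG(y)\le\int_{\Gamma}a(y)\,\diff\calG(y)$. For the $\Sigma$-term no sign assumption on $\calS$ is needed: the corresponding (P')-constraint is the equality $\int_{\Phi}B(z,x)\,\diff\calF(x)=b(z)$ for all $z\in\Sigma$, so that term equals $\int_{\Sigma}b(z)\,\diff\calS(z)$ outright. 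Chaining the three displays gives $\int_{\Phi}c(x)\,\diff\calF(x)\le\int_{\Gamma}a(y)\,\diff\calG(y)+\int_{\Sigma}b(z)\,\diff\calS(z)$, which is the claim.

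I expect the only real point requiring care to be the rigorous justification of the Fubini interchange — checking joint measurability of the integrands and the finiteness of the iterated total-variation integrals — whereas the sign bookkeeping (using $\calF\ge 0$ in the first step, $\calG\ge 0$ in the last, and exploiting that the $\Sigma$-block is an equality constraint so that $\calS$ may be signed) is routine but must be tracked precisely to see why exactly $\calF\ge 0$ and $\calG\ge 0$, and no more, are the sign conditions that make the argument go through.
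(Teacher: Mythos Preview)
Your proposal is correct and follows essentially the same approach as the paper's own proof: use the (D')-constraint together with $\calF\ge 0$, interchange the order of integration via Fubini, and then use the (P')-constraints together with $\calG\ge 0$ (and the equality constraint for the $\calS$-block). The paper's version is simply a compressed two-line chain of the same inequalities you spell out in detail.
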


\begin{proof}
Using Fubini's  Theorem, we have 
\begin{align*}
\int_{\Phi}c(x)\diff\calF(x)&\leq
\int_{\Gamma\times\Phi}A(y,x)\diff(\calG\times\calF)(y,x)+
\int_{\Sigma\times\Phi}B(z,x)\diff(\calS\times\calF)(z,x)\\
&\leq\int_{\Gamma}a(y)\diff\calG(y)+
\int_{\Sigma}b(z)\diff\calS(z).
\end{align*}
\end{proof}
\\
We are interested in finding conditions on measures that imply strong duality between the primal and dual problems, i.e. $Val(P') = Val(D')$, where by $Val(P')$ and $Val(D')$ we denote the optimal values of the problems $(P')$ and $(D')$ respectively.

%%%%%%%%%%%%%%%%%%%%%%
\section{General Results for Conic Optimisation Problems}\label{sec:shapiro}
As mentioned above, in this section we introduce results on conic linear optimisation problems from \cite{shap}.\\
Consider a conic linear optimisation problem of the following form
\begin{equation}\label{Proof By Shapiro Results:eq1}
\text{(P)  }\min_{f\in C}\langle c, f \rangle \quad \text{subject to} \quad \mathcal Af+h \in K,
\end{equation}
where $X$ and $Y$ are linear spaces, $C\subset X$, $K\subset Y$ are convex cones, $h\in Y$ and $\mathcal A:X\rightarrow Y$ is a linear map. Assume that $X$ and $Y$ are paired with some linear spaces $X'$ and $Y'$ respectively, so bilinear forms $\langle \cdot, \cdot, \rangle:X'\times X\rightarrow \R$ and $\langle \cdot, \cdot, \rangle:Y'\times Y\rightarrow \R$ are defined. We call the problem \eqref{Proof By Shapiro Results:eq1} the Primal problem.\\
The results of Shapiro are based on conjugate duality first introduced by Rockefellar \cite{rock}, \cite{rock-1}.\\
Define the positive dual cone of $C$ as
\begin{equation}\label{Proof By Shapiro Results:eq2}
C^* := \left\{ f^*\in X^* : \langle f^*, f\rangle \geq 0, \quad\forall f \in C \right\},
\end{equation}
and similarly for the cone $K$
\begin{equation}\label{Proof By Shapiro Results:eq3}
K^* := \left\{ g^*\in Y^* : \langle g^*, g\rangle \geq 0, \quad\forall g \in K \right\},
\end{equation}
We also need an assumption for $X'$ so that the adjoint mapping of $A$ exists.\\
\begin{assumptions}\label{Proof By Shapiro Results:asmp1}
For any $g^*\in Y'$ there exists a unique $f^*\in X'$ such that $\langle g^*, \mathcal Af\rangle = \langle f^*, f\rangle$ for all $f\in X$.
\end{assumptions}
Based on this assumption we can define the adjoint mapping $\mathcal A^*:Y'\rightarrow X'$ by the equation
\begin{equation}\label{Proof By Shapiro Results:eq4}
\langle g^*, \mathcal Af\rangle = \langle \mathcal A^*g^*, f\rangle, \quad \forall f\in X.
\end{equation}
Now consider the Lagrangian function of the primal problem \eqref{Proof By Shapiro Results:eq1}
\begin{equation}\label{Proof By Shapiro Results:eq5}
L(f, g^*):= \langle c, f\rangle + \langle g^*, Af + h\rangle
\end{equation}
and the following optimisation problem
\begin{equation}\label{Proof By Shapiro Results:eq6}
\min_{f\in C}\left\{\psi(f):=\max_{g^*\in -K^*}\left\{L(f, g^*)\right\}\right\}
\end{equation}
By changing the $\min$ and $\max$ operators we get the Lagrangian Dual problem
\begin{equation}\label{Proof By Shapiro Results:eq6}
\max_{g^*\in -K^*}\left\{\phi(g^*):=\min_{f\in C}\left\{L(f, g^*)\right\}\right\}
\end{equation}
which is equivalent to the following optimisation problem
\begin{equation}\label{Proof By Shapiro Results:eq7}
\text{(D)  }\max_{g^*\in-K^*}\langle g^*, h\rangle \quad\text{subject to}\quad \mathcal A^*g^*+c\in C^*.
\end{equation}
which we call the dual problem.\\
The aim of this section is to find conditions under which $Val(D) = Val(P)$, where by $Val(D)$ and $Val(P)$ we denote the objective values of the Dual \eqref{Proof By Shapiro Results:eq7} and Primal \eqref{Proof By Shapiro Results:eq1} problems respectively.\\
Note that the dual problem is also a conic linear problem, and that it is easy to show the weak duality, i.e., $Val(D) \leq Val(P)$.\\
Further, we associate with the primal problem the optimal value function
\begin{equation}\label{Proof By Shapiro Results:eq8}
v(g) := \inf\left\{\langle c, f\rangle:f\in C, \mathcal Af + g\in K\right\}. 
\end{equation}
We define $v(g)$ to be $+\infty$ if the set $\left\{f\in C: \mathcal Af + g\in K\right\}$ is empty. We have $Val(P) = v(h)$.\\
From \cite{rock-1} we know that the extended optimal value function $v(g)$ is convex and positively homogeneous of degree $1$, i.e., $\forall t > 0$ and $g\in Y$ $v(tg) = tv(g)$.\\
The conjugate of $v(g)$ is defined as 
\begin{equation}\label{Proof By Shapiro Results:eq9}
v^*(g^*) := \sup_{g\in Y}\left\{\langle g^*, g\rangle - v(g)\right\}.
\end{equation}
Evaluating the formulae above we get
\begin{align*}
v^*(g^*) &= \sup\left\{ \langle g^*, g \rangle - \langle c, f\rangle : (f, g^*)\in X\times Y^*, f\in C, \mathcal Af+g\in K\right\}\\
&= \sup_{f\in C}\sup_{\mathcal Af+g\in K}\left\{ \langle g^*, g \rangle - \langle c, f\rangle \right\}\\
&= \sup_{f\in C}\sup_{g\in K}\left\{ \langle g^*, g -\mathcal Af \rangle - \langle c, f\rangle \right\}\\
&= \sup_{f\in C}\sup_{g\in K}\left\{ \langle g^*, g\rangle - \langle \mathcal A^*g^* + c, f\rangle \right\}
\end{align*}
It is easy to show that $v^*(g^*)$ is the indicator function of the feasible set of the dual problem, since from $g^*\in -K^*$ and $\mathcal A^*g^* + c\in C^*$ follows that $v^*(g^*) = 0$, and $v^*(g^*) = +\infty$ otherwise. So we can write the dual problem as
\begin{equation}\label{Proof By Shapiro Results:eq10}
\max_{g^*\in Y^*}\left\{ \langle g^*, h\rangle - v^*(g^*)\right\}
\end{equation}
Taking the biconjugate of $v(y)$
\begin{equation}\label{Proof By Shapiro Results:eq10}
v^{**}(g):= \sup_{g^*\in Y^*}\left\{ \langle g^*, g\rangle - v^*(g^*)\right\},
\end{equation}
we see that $Val(D) = v^{**}(h)$, hence we get, that if $v(h) = v^{**}(h)$, then there is no duality gap between Lagrangian primal and dual problems.\\
Now we aim to find conditions such that $v(h) = v^{**}(h)$.\\
We described the main approach to the proof of the strong duality in this framework, and now, without going into details, we will introduce the results given by Shapiro. More interested reader can refer to \cite{shap} for more details.\\
We make an assumption which will be considered to be hold throughout this section.
\begin{assumptions}
The spaces $Y$ and $Y'$ are paired locally convex topological vector spaces.
\end{assumptions}
Denote by $\lsc v$ the lower semicontinous hull of the function $v$, i.e.
\begin{equation}\label{Proof By Shapiro Results:eq11}
\lsc v(g) = \min\left\{ v(g), \liminf_{z\rightarrow g}v(z)\right\},
\end{equation}
and by $\cl v$ the closure of the function $v$:
\begin{equation}\label{Proof By Shapiro Results:eq12}
\cl v(\cdot) := \begin{cases}
\lsc v(\cdot), & \mbox{if } \lsc v(g) > -\infty \mbox{ for all } g\in Y,\\ 
-\infty, & \mbox{if } \lsc v(g) = -\infty \mbox{ for at least one } g\in Y.
 \end{cases}
\end{equation}
We say that the problem $(P)$ is sub-consistent if $\lsc v(h) < +\infty$ (if the problem $(P)$ is consistent, i.e. it's feasible set is nonempty, then it is also sub-consistent). Moreover, the Fenchel-Moreau theorem implies that $v^{**} = \cl v$. Taking into account the fact that if $\lsc v(h) < +\infty$ then $\cl v(h) = \lsc v(h)$ we get the following proposition:
\begin{proposition}[Proposition 2.2, \cite{shap}]
The following holds:
\begin{enumerate}
\item $Val(D) = \cl v(h)$.
\item If $(P)$ is sub-consistent, then $Val(D) = \lsc v(h)$.
\end{enumerate}
\end{proposition}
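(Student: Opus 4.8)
The plan is to stitch together the facts already assembled in the paragraphs immediately preceding the statement; nothing is needed beyond the Fenchel--Moreau theorem, which has been cited. \emph{Step 1 (reduce the dual value to a biconjugate).} I would start from the reformulation of $(D)$ as $\max_{g^*\in Y^*}\{\langle g^*,h\rangle - v^*(g^*)\}$ displayed above, which is valid because $v^*$ has been shown to coincide with the $\{0,+\infty\}$-valued indicator of the feasible set of $(D)$. By the definition of the biconjugate this maximum equals $v^{**}(h)$, so the whole proposition reduces to evaluating $v^{**}(h)$.

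\emph{Step 2 (identify the biconjugate with the closure).} Since $Y$ and $Y'$ form a pairing of locally convex topological vector spaces (the standing assumption of this section) and $v$ is convex and positively homogeneous of degree $1$, the Fenchel--Moreau theorem gives $v^{**}=\cl v$, with $\cl v$ defined by the two-case formula \eqref{Proof By Shapiro Results:eq12}. The second branch of that definition is exactly what accommodates the case in which $v$ is improper, i.e.\ $(P)$ is unbounded along some perturbation direction so that $v$ attains $-\infty$. Evaluating at $g=h$ yields part (1): $Val(D)=v^{**}(h)=\cl v(h)$.

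\emph{Step 3 (specialise under sub-consistency).} If $(P)$ is sub-consistent then $\lsc v(h)<+\infty$ by definition, and one invokes the convex-analytic fact, recalled above, that $\cl v(h)=\lsc v(h)$ whenever $\lsc v(h)<+\infty$. Together with part (1) this gives $Val(D)=\lsc v(h)$, which is part (2). The only genuinely non-routine inputs are the Fenchel--Moreau theorem and the $\cl$-versus-$\lsc$ comparison, both imported from \cite{rock-1,shap}; everything else is bookkeeping already done in the excerpt. The one place that demands care --- and which I would flag as the crux --- is the behaviour at improper points: without the two-case definition of $\cl v$ the identity $v^{**}=\cl v$ breaks down, part (1) would need an extra properness hypothesis on $v$, and it is precisely to avoid this subtlety that part (2) assumes sub-consistency before passing from $\cl v$ to $\lsc v$.
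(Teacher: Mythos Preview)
Your proposal is correct and follows exactly the argument the paper gives in the lines immediately preceding the proposition: first $Val(D)=v^{**}(h)$ via the indicator-function reformulation of $v^*$, then $v^{**}=\cl v$ by Fenchel--Moreau, and finally the passage from $\cl v(h)$ to $\lsc v(h)$ under the sub-consistency hypothesis $\lsc v(h)<+\infty$. There is nothing to add; your remark about why the two-branch definition of $\cl v$ is needed for improper $v$ is a helpful clarification but does not depart from the paper's route.
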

The above proposition shows that if $P$ is sub-consistent then strong duality holds, i.e., $Val(D) = Val(P)$ iff $v(h)$ is lower semicontinous at $g = h$. But it may be difficult to verify the semicontinuity directly, so we seek more tractable conditions in the subsequent analysis.\\
Define the sub-differential of the function $v$ at a point $g$ (where $v$ is finite) as
\begin{equation}\label{Proof By Shapiro Results:eq13}
\partial v(g):= \left\{ g^*\in Y' : v(z) - v(g)\geq \langle g^*, z - g\rangle, \quad \forall z\in Y\right\}.
\end{equation}
We say that $v$ is sub-differentiable at a point $g$ if $v(g)$ is finite and $\partial v(g)$ is nonempty. Further, we know that if $v$ is sub-differentiable at $g = h$, then $v^{**} = v(h)$, and conversely; if $v(b)$ is finite and $v^{**} = v(h)$, then $\partial v(h) = \partial v^{**}(h)$ \cite{rock-1}.\\
We now get the following proposition.
\begin{proposition}[Proposition 2.5, \cite{shap}]
If the optimal value function $v(g)$ is sub-differentiable at the point $g = h$, then $Val(P) = Val(D)$ and the set of optimal solutions of $(D)$ is $\partial v(h)$. Conversely, if $Val(P) = Val(D)$ and is finite, then $Sol(D) = \partial v(h)$. By $Sol(D)$ we denote the set of solutions of the problem $(D)$.
\end{proposition}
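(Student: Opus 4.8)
The plan is to reduce both assertions to a single observation: in each scenario one has $v(h)=v^{**}(h)$, finite, and from this $Sol(D)=\partial v(h)$ follows by the Fenchel--Young equality for the convex function $v$. I would use the identities already recorded, namely $Val(P)=v(h)$ and $Val(D)=v^{**}(h)$, together with the fact that $v^*$ is the $\{0,+\infty\}$-valued indicator of the feasible set of $(D)$, so that $(D)$ is the problem $\max_{g^*\in Y'}\{\langle g^*,h\rangle-v^*(g^*)\}$ and a feasible $g^*$ is optimal exactly when $\langle g^*,h\rangle-v^*(g^*)=Val(D)$.

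\emph{Step one: establish $v(h)=v^{**}(h)$.} In the forward case this is the fact quoted just before the statement, that sub-differentiability of $v$ at $h$ implies $v^{**}(h)=v(h)$ (and that $v(h)$ is then finite). In the converse case it is the hypothesis $Val(P)=Val(D)$, i.e.\ $v(h)=v^{**}(h)$, assumed finite. Either way $Val(D)=v^{**}(h)=v(h)=Val(P)$, which is the claimed equality of optimal values.

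\emph{Step two: identify $Sol(D)$.} Since $v$ is convex and $v(h)$ is finite, the Fenchel--Young equality states, for any $g^*\in Y'$, that $g^*\in\partial v(h)$ if and only if $v(h)+v^*(g^*)=\langle g^*,h\rangle$. On the other hand $g^*$ is optimal for $(D)$ if and only if $\langle g^*,h\rangle-v^*(g^*)=Val(D)=v(h)$, which is the same equation. Hence $Sol(D)=\partial v(h)$ in both cases. (Equivalently one may phrase this via $Sol(D)=\partial v^{**}(h)$ and then invoke $\partial v(h)=\partial v^{**}(h)$, valid since $v(h)=v^{**}(h)$ is finite; this is the route suggested by the discussion preceding the statement.)

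The argument has little depth; the care needed lies in the degenerate cases that legitimise the convex-analytic identities. One applies $v^{**}=\cl v$ and the Fenchel--Young equality at $g=h$, which require $v$ to be proper: not identically $+\infty$ (guaranteed by finiteness of $v(h)$) and never equal to $-\infty$ (guaranteed in the forward case because $\partial v(h)\neq\emptyset$ furnishes a global affine minorant of $v$, and in the converse case because $v$ attaining $-\infty$ would force $v^{**}\equiv-\infty$, contradicting finiteness of $Val(D)$). One also uses that $\partial v(h)$ is formed in the paired space $Y'$, so that its elements are legitimate dual variables; this is precisely what the standing assumption that $(Y,Y')$ is a paired locally convex space secures.
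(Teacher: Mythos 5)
Your argument is correct and follows essentially the route the paper itself sets up (and that Shapiro's cited proof uses): the identities $Val(P)=v(h)$, $Val(D)=v^{**}(h)$, the representation of $(D)$ via the conjugate $v^*$, and the Fenchel--Young characterisation of $\partial v(h)$, together with the quoted fact that sub-differentiability at $h$ forces $v(h)=v^{**}(h)$. Your attention to properness of $v$ in the degenerate cases is a sensible addition but does not change the substance of the argument.
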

However, checking the sub-differentiability for the optimal value function may still be difficult.\\
Consider the set
\begin{equation}\label{Proof By Shapiro Results:eq14}
M := \left\{ (g, \alpha)\in Y\times\R : g = k - \mathcal Af, \alpha\geq \langle c, f \rangle, f\in C, k\in K\right\}.
\end{equation}
It is easy to show that the optimal value of the problem $(P)$ is equal to the optimal value of the following problem
\begin{align*}
&\min \alpha\\ 
\text{ s.t. } &(h, \alpha)\in M.
\end{align*}
\begin{proposition}[Proposition 2.6, \cite{shap}]
Suppose that $Val(P)$ is finite and the cone $M$ is closed in the product topology of $Y\times \R$. Then $Val(P) = Val(D)$ and the primal problem $(P)$ has an optimal solution.
\end{proposition}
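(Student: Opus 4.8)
The plan is to recognise the cone $M$ of \eqref{Proof By Shapiro Results:eq14} as (essentially) the epigraph of the optimal value function $v$, so that closedness of $M$ translates into lower semicontinuity and properness of $v$; the conclusion then follows from the first proposition cited above (Proposition~2.2 of \cite{shap}), which already gives $Val(D)=\cl v(h)$.

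First I would check that $M$ is a convex cone: this is immediate from \eqref{Proof By Shapiro Results:eq14}, since $C$ and $K$ are convex cones, $\mathcal A$ is linear, and the constraint $\alpha\geq\langle c,f\rangle$ is preserved under nonnegative linear combinations. Next I would establish the sandwich
\[
M\ \subseteq\ \mathrm{epi}\,v\ \subseteq\ \overline M ,\qquad \mathrm{epi}\,v:=\{(g,\alpha)\in Y\times\R:\alpha\geq v(g)\}.
\]
The first inclusion is read straight off \eqref{Proof By Shapiro Results:eq8}: if $(g,\alpha)\in M$ then some $f\in C$ with $\mathcal Af+g\in K$ satisfies $\langle c,f\rangle\leq\alpha$, hence $v(g)\leq\alpha$. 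For the second, fix $(g,\alpha)$ with $\alpha\geq v(g)$; if $v(g)$ is finite, near-optimal $f\in C$ with $\mathcal Af+g\in K$ give $(g,\alpha+\varepsilon)\in M$ for every $\varepsilon>0$, so $(g,\alpha)\in\overline M$; if $v(g)=-\infty$ then $(g,\alpha)\in M$ already; and over $g$ with $v(g)=+\infty$ there is nothing in $\mathrm{epi}\,v$. Taking closures gives $\overline M=\overline{\mathrm{epi}\,v}$, so under the hypothesis that $M$ is closed we obtain $\mathrm{epi}\,v=M$; in particular $\mathrm{epi}\,v$ is closed, i.e. $v$ is lower semicontinuous and $\lsc v=v$.

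It remains to rule out the improper case. Suppose $v(g_0)=-\infty$ for some $g_0\in Y$. Then there are $f_n\in C$ with $k_n:=\mathcal Af_n+g_0\in K$ and $\langle c,f_n\rangle\to-\infty$, so $(g_0,\langle c,f_n\rangle)\in M$; multiplying by $1/(-\langle c,f_n\rangle)$ for large $n$ and using that $M$ is a closed cone yields $(0,-1)\in M=\mathrm{epi}\,v$. Since $Val(P)=v(h)$ is finite, $(h,v(h))\in\mathrm{epi}\,v=M$, and adding nonnegative multiples of $(0,-1)$ keeps the point in the convex cone $M$; this forces $v(h)=-\infty$, contradicting finiteness. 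Hence $v>-\infty$ everywhere, so $\cl v=\lsc v=v$. Proposition~2.2 of \cite{shap} then gives $Val(D)=\cl v(h)=v(h)=Val(P)$, and $(h,v(h))\in\mathrm{epi}\,v=M$, read through \eqref{Proof By Shapiro Results:eq14}, produces $f\in C$ with $\mathcal Af+h\in K$ and $\langle c,f\rangle\leq v(h)=Val(P)$, i.e. a primal optimal solution.

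I expect the two substantive points to be the inclusion $\mathrm{epi}\,v\subseteq\overline M$ (the rest of the epigraph bookkeeping and the cone property being routine) and the exclusion of $v\equiv-\infty$ on part of its domain, which is the one place where finiteness of $Val(P)$ is genuinely used; an alternative finish via subdifferentiability of $v$ at $h$ and Proposition~2.5 of \cite{shap} is possible, but would require separately ruling out a vertical supporting hyperplane, which the epigraphical route avoids.
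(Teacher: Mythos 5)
Your proof is correct, and since the paper only quotes this proposition from Shapiro without reproducing an argument, there is nothing in-paper to compare against: your route ($M\subseteq\mathrm{epi}\,v\subseteq\overline M$, closedness of $M$ forcing $v$ lower semicontinuous with $\mathrm{epi}\,v=M$, the cone argument giving $(0,-1)\in M$ to exclude the value $-\infty$, then $Val(D)=\cl v(h)=v(h)$ and attainment from $(h,v(h))\in M$) is essentially the standard proof behind Shapiro's Proposition 2.6. The one genuinely load-bearing step, which you handle correctly, is ruling out $v=-\infty$ somewhere using finiteness of $Val(P)$; without it the conclusion $Val(D)=\cl v(h)$ could degenerate to $-\infty$.
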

From convex analysis we know that if $v(g) < \infty$ and continuous at $h$, $Y$ is a Banach space and $Y^*$ is its standard dual, then $\partial v(h)$ is closed and bounded in the dual topology of $Y^*$ (\cite{hol}, p. 84). We get the following proposition.
\begin{proposition}[Proposition 2.7, \cite{shap}]
If the optimal value function $v(g)$ is continuous at $g = h$ and if $Y$ is a Banach space paired with its standard dual $Y^*$, then the set of optimal solutions of $(D)$ is bounded in the dual norm topology of $Y^*$.
\end{proposition}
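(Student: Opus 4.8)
The plan is to obtain the statement by chaining together the results already assembled in this section, in particular Proposition 2.5 together with the convex-analytic fact recalled immediately before the statement. First I would note that continuity of $v$ at $g=h$ forces $v(h)$ to be finite: a convex function that is continuous at a point cannot take the values $\pm\infty$ there, so $v$ is a proper convex function, finite at $h$. The next step is to invoke the standard fact from convex analysis that a convex function which is finite and continuous at a point is subdifferentiable there; hence $\partial v(h)\neq\emptyset$, i.e.\ $v$ is sub-differentiable at $g=h$ in the sense of \eqref{Proof By Shapiro Results:eq13}.

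With sub-differentiability at $h$ in hand, Proposition 2.5 applies verbatim and yields $Val(P)=Val(D)$ together with the identification $Sol(D)=\partial v(h)$. It therefore suffices to show that $\partial v(h)$ is bounded in the dual norm topology of $Y^*$. But this is exactly the content of the fact quoted from \cite{hol}, p.~84: since $v$ is finite and continuous at $h$, $Y$ is a Banach space, and $Y^*$ is its standard dual, the set $\partial v(h)$ is closed and bounded in the dual norm. Combining the two displays gives the claim.

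The only point requiring a little care — and what I would regard as the main, though mild, obstacle — is the interplay of topologies. Proposition 2.5 identifies $Sol(D)$ with $\partial v(h)$ as a subset of the pairing space $Y'$, whereas the boundedness we want is with respect to the dual \emph{norm} on $Y^*$; one must check that, under the standing hypothesis that $Y$ is Banach and paired with its standard dual, the subdifferential computed in the pairing coincides with the usual Banach-space subdifferential and that ``bounded in the dual topology'' there means norm bounded. For a Banach space this is immediate from the Banach--Steinhaus theorem (a weak-$*$ bounded subset of $Y^*$ is norm bounded), so no genuine difficulty arises, and the remainder of the argument is a direct concatenation of the quoted propositions.
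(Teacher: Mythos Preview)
Your proof is correct and follows exactly the route the paper indicates: the paper does not give a standalone proof but simply quotes the Holmes fact (\cite{hol}, p.~84) that $\partial v(h)$ is closed and bounded when $v$ is finite and continuous at $h$, and then says ``We get the following proposition,'' implicitly using Proposition~2.5 to identify $Sol(D)$ with $\partial v(h)$. You have merely made explicit the sub-differentiability step and the topology check that the paper leaves to the reader.
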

From \cite{rob} we know that if $X$ and $Y$ are Banach spaces equipped with strong topologies, the cones $C$ and $K$ are closed and $\langle c, \cdot\rangle$ and $\mathcal A:X\rightarrow Y$ are continuous, then $v(g)$ is continuous at $g = h$ if and only if
\begin{equation}\label{Proof By Shapiro Results:eq15}
h\in \interier(\dom v).
\end{equation}
Since $\dom v = -\mathcal A(C) + K$, we can write the condition \eqref{Proof By Shapiro Results:eq15} as
\begin{equation}\label{Proof By Shapiro Results:eq16}
-h\in \interier(\mathcal A(C) - K).
\end{equation}
Hence we get the final stone of the framework we need to prove our strong duality.
\begin{proposition}[Proposition 2.9, \cite{shap}]\label{mainProp}
Suppose that $X$ and $Y$ are Banach spaces, the cones $C$ and $K$ are closed, $\langle c, \cdot\rangle$ and $\mathcal A:X\rightarrow Y$ are continuous, and that Condition \eqref{Proof By Shapiro Results:eq16} holds. Then $Val(P) = Val(D)$ and $Sol(D)$ is nonempty and bounded.
\end{proposition}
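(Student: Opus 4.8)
The plan is to obtain the statement by chaining together the results collected above in this section, with Condition \eqref{Proof By Shapiro Results:eq16} serving as the single hypothesis that still has to be exploited. First I would invoke the characterisation borrowed from \cite{rob}: since $X$ and $Y$ are Banach spaces, the cones $C$ and $K$ are closed, and $\langle c,\cdot\rangle$ and $\mathcal A$ are continuous, the optimal value function $v$ is continuous at $g=h$ if and only if $h\in\interier(\dom v)$; and because $\dom v=-\mathcal A(C)+K$, this is precisely Condition \eqref{Proof By Shapiro Results:eq16}. Hence, under the hypotheses of the proposition, $v$ is continuous at $h$, and in particular $v(h)$ is finite.

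Next I would pass from continuity to subdifferentiability. A convex function on a Banach space that is finite and continuous at a point is locally Lipschitz there, hence subdifferentiable at that point (see \cite{hol}), so $\partial v(h)\neq\emptyset$. Now the subdifferentiability criterion (Proposition 2.5) applies: it yields $Val(P)=Val(D)$ and identifies $Sol(D)=\partial v(h)$, which is therefore nonempty. Finally, since $v$ is continuous at $h$ and $Y$ is a Banach space paired with its standard dual $Y^*$, Proposition 2.7 shows that $Sol(D)=\partial v(h)$ is bounded in the dual norm topology. Combining these, $Val(P)=Val(D)$ and $Sol(D)$ is nonempty and bounded, as claimed.

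There is no deep obstacle here, since the proposition is essentially an assembly of the preceding results; the delicate part is bookkeeping. One must check that the notion of continuity of $v$ supplied by \cite{rob} is the same one required both for the subdifferentiability step and for Proposition 2.7, verify from the definition \eqref{Proof By Shapiro Results:eq8} that $\dom v=-\mathcal A(C)+K$ so that \eqref{Proof By Shapiro Results:eq15} and \eqref{Proof By Shapiro Results:eq16} are genuinely equivalent, and confirm that continuity at $h$ forces $v(h)>-\infty$, so that the subdifferential is well defined and $Val(P)$ is finite. One should also make sure that closedness of $C$ and $K$ together with continuity of $\langle c,\cdot\rangle$ and $\mathcal A$ are exactly the hypotheses under which the cited result of \cite{rob} holds; if \cite{rob} additionally needs, say, interiority of $C$ or surjectivity of $\mathcal A$, such conditions would have to be verified or incorporated into the statement.
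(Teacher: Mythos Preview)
Your proposal is correct and follows exactly the route the paper itself lays out in the paragraphs preceding the proposition: Condition~\eqref{Proof By Shapiro Results:eq16} is rewritten as $h\in\interier(\dom v)$ via $\dom v=-\mathcal A(C)+K$, the result of \cite{rob} then gives continuity of $v$ at $h$, and the conclusion is read off from Propositions~2.5 and~2.7 together with the standard fact that a finite continuous convex function on a Banach space is subdifferentiable. The paper does not spell out the proof further than this, so your assembly of the pieces matches its intended argument.
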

If the cone $K$ has a non-empty interior, then Condition (\ref{Proof By Shapiro Results:eq16}) is equivalent to (\cite{bon-shap}, Proposition 2.106)
\begin{equation}
\exists \bar f\in C \text{ such that } \mathcal A\bar f + h\in \interier(K).
\end{equation}
If the later condition holds, then it is said that the generalized Slater condition is satisfied for Problem (\ref{Proof By Shapiro Results:eq1}).\\
In many applications we have equality type constraints for optimisation problems of the form (\ref{Proof By Shapiro Results:eq1}). In this case the cone $K$ has obviously a single element $0$, and hence, the interior is empty.\\
If the constraint in the problem (\ref{Proof By Shapiro Results:eq1}) is given by the equality
\begin{equation*}
\mathcal Af + h = 0,
\end{equation*}
then the regularity condition (\ref{Proof By Shapiro Results:eq16}) is equivalent to (\cite{bon-shap}, section 2.3.4)
\begin{eqnarray*}
&\mathcal A(X) = Y,\\
&\exists \bar f\in \interier(C) \text{ s. t. } \mathcal A\bar f + h = 0.
\end{eqnarray*}
After having introduced the mathematical framework for general conic linear optimisation problems, we are ready to use these results to get strong duality results for two particular cases of our general duality theory, which we discuss in the next two sections.

\section{Measures with $L^p$ Densities}\label{lpDensities}
Consider a special case of the primal problem $(P')$ from Section \ref{problemFormulation}, where the optimisation is over measures that have a density function that belongs to $L^p(\Phi)$.\\
Let $\Phi\subseteq\R^n$, $\Gamma\subseteq\R^m$ and $\Sigma\subseteq\R^k$.
\begin{align}
	(P^L) \sup_{f\in L^p(\Phi)} & \int_{\Phi} c(x)f(x)dx \nonumber\\ 
	                      & \int_{\Phi} A(y, x)f(x)dx \leq a(y), \text{ a.e. } y\in \Gamma \nonumber\\
	                      & \int_{\Phi} B(z, x)f(x)dx = b(z), \text{ a.e. } z\in \Sigma \nonumber\\
	                      & f \geq 0. \nonumber
\end{align}
Let $c\in L^q(\Phi)$ and $a\in L^p(\Psi)$ for some $1< p <\infty$ and $n, m, k$.\\
Assume $A:\Gamma\times\Phi\rightarrow \R$ and $B:\Sigma\times\Phi\rightarrow \R$ are such that 
\begin{eqnarray*}
A(y, \cdot)&\in L^q(\Phi)\\
A(\cdot, x)&\in L^p(\Gamma)\\
B(z, \cdot)&\in L^q(\Phi)\\
B(\cdot, x)&\in L^p(\Sigma)\\
\end{eqnarray*}
$\forall x\in\Phi, \forall y\in\Gamma$ and $\forall z\in\Sigma$.\\
Hence the functions $\tau_A(x) = \|A(\cdot, x)\|_p$, $\tau_B(x) = \|B(\cdot, x)\|_p$ and $\rho_A(y) = \|A(y, \cdot)\|_q$, $\rho_B(z) = \|B(z, \cdot)\|_q$ are well defined.\\
We also make the following assumption.
\begin{assumptions}\label{assumption:mappingToLp}
Functions $A$ and $B$ are such, that 
\begin{eqnarray*}
\int_{\Phi} A(y, x)f(x)dx\in L^p(\Gamma), \text{ a.e. }y\in\Gamma\\
\int_{\Phi} B(z, x)f(x)dx\in L^p(\Sigma), \text{ a.e. }z\in\Sigma
\end{eqnarray*}
\end{assumptions}
Later we will prove a lemma which, under some conditions, guarantees that the above assumptions are true, but for now we consider them as given.\\
Denote 
\begin{eqnarray*}
X &:=& L^p(\Phi),\\
C &:=& L^p_+(\Phi)\\
Y &:=& L^p(\Gamma)\times L^p(\Sigma),\\
K &:=& L^p_-(\Gamma)\times\left\{0\right\}.
\end{eqnarray*}
In this case, since $1< p < +\infty$ we have
\begin{eqnarray*}
X^* &:=& L^q(\Phi),\\
C^* &:=& L^q_+(\Phi)\\
Y^* &:=& L^q(\Gamma)\times L^q(\Sigma),\\
K^* &:=& L^q_-(\Gamma)\times L^q(\Sigma).
\end{eqnarray*}
and $X^{**} = X,\quad Y^{**} = Y,\quad  C^{**} = C,\quad  K^{**} = K$.\\
With these notations we can write the problem $(P^L)$ as
\begin{equation}
\min_{f\in C}\langle -c, f \rangle \quad \text{subject to} \quad \mathcal Af+h \in K,
\end{equation}
where the linear operator $\mathcal A:X\rightarrow Y$ is defined as
\begin{equation*}
\mathcal Af(y, z) = 
\left( \begin{array}{c}  \int_{\Phi} A(y, x)f(x)dx\\ 
\int_{\Phi} B(z, x)f(x)dx \end{array} \right)
\end{equation*}
and $h$ is defined as
\begin{equation*}
h(y, z) = 
\left( \begin{array}{c}  -a(y)\\ -b(z) \end{array} \right)
\end{equation*}
Construct the Lagrangian of the problem $(P^L)$
\begin{equation}
L(f, \lambda^*) = -\langle c, f\rangle + \langle \lambda^*, \mathcal Af+h\rangle,
\end{equation}
where $\lambda^*\in Y$, i.e. it has the following form
\begin{equation*}
\lambda^* = 
\left( \begin{array}{c}  g^*\\ s^* \end{array} \right), \quad g^*\in L^q(\Gamma), \quad s^*\in L^q(\Sigma)
\end{equation*}
The Lagrangian function can thus be written as
\begin{eqnarray}\label{lagrangianFunction}
L(f, g^*, s^*) &=& -\int_{\Phi}c(x)f(x)\diff x\nonumber\\
               &+& \int_{\Gamma}\left(\int_{\Phi}A(y, x)f(x)\diff x - a(y)\right)g^*(y)\diff y\\
               &+& \int_{\Sigma}\left(\int_{\Phi}B(z, x)f(x)\diff x - b(z)\right)s^*(z)\diff z\nonumber
\end{eqnarray}
The Lagrangian primal problem is
\begin{equation}
\min_{f\in C}\sup_{\lambda^*\in-K^*}L(f, \lambda^*)
\end{equation}
Interchanging the $\min$ and $\max$ operators, we obtain the dual Lagrangian, problem
\begin{equation}\label{lagrangianDualProblem}
\sup_{\lambda^*\in-K^*}\min_{f\in C}L(f, \lambda^*).
\end{equation}
In order to evaluate (\ref{lagrangianDualProblem}) we change the order of integration in (\ref{lagrangianFunction}). We have
\begin{equation}\label{lagrangianFunction2}
L(f, g^*, s^*) = -\langle a, g^*\rangle - \langle b, s^*\rangle + \langle \phi, f\rangle,
\end{equation}
where 
\begin{equation}
\phi(x) = \int_{\Gamma}A(y, x)g^*(y)\diff y + \int_{\Sigma}B(z, x)s^*(z)\diff z - c(x)
\end{equation}
Thus,
\begin{equation}
\min_{f\in C}L(f, g^*, s^*) =
\begin{cases}
-\langle a, g^*\rangle - \langle b, s^*\rangle, & \mbox{if } \phi(x) \geq 0, (\text{ a.e. } x\in\phi),\\ 
-\infty, & \mbox{otherwise}
\end{cases}
\end{equation}
This leads to the equivalence of the Lagrangian dual problem to
\begin{align}
	(D^L) \inf_{g\in L^q(\Gamma), s\in L^q(\Sigma)} & \int_{\Gamma} a(y)g(y)dy + \int_{\Sigma}b(z)s(z)\diff z\nonumber\\ 
	                      & \int_{\Gamma} A(y, x)g(y)dy + \int_{\Sigma} B(z, x)s(z)\diff z \geq c(x), \text{ a.e. } x\in \Phi \nonumber\\
	                      & g \geq 0. \nonumber
\end{align}
\begin{definition}
We say that Slater condition holds for the problem $(P^L)$ if $\exists\bar f\in L^p(\Phi)$ such that
\begin{eqnarray*}
\int_{\Phi}A(y,x)\bar f(x)dx < a(y), \quad a.e. \quad y \in \Gamma\\
\int_{\Phi}B(z,x)\bar f(x)dx = b(z), \quad a.e. \quad z \in \Sigma
\end{eqnarray*}
and the function $B$ is such that $\mathcal B(X) = L^p(\Sigma)$, where 
\begin{equation*}
\mathcal B(f) := \int_{\Phi}B(z,x)f(x)dx.
\end{equation*}
\end{definition}
Now we are ready to use the results from the previous section to prove strong duality in our case. Note that all the assumption in the Section \ref{sec:shapiro} are satisfied, Hence we get the following theorem.\\
\begin{theorem}[Strong Duality for $L^p$ Problems]
Let the linear operators $\mathcal A$ and $\langle c, \cdot \rangle$ be continuous. Suppose that Assumption (\ref{assumption:mappingToLp}) holds. Then, if the Slater condition holds for the problem $(P^L)$, we have $Val(P^L) = Val(D^L)$ and $Sol(D^L)$ is bounded.
\end{theorem}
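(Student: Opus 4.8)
The plan is to recognise $(P^L)$ as an instance of the abstract conic programme $(P)$ of Section~\ref{sec:shapiro} and then to verify, one by one, the hypotheses of Proposition~\ref{mainProp}, from which the assertion is immediate. With the identifications already made above, $X=L^p(\Phi)$, $Y=L^p(\Gamma)\times L^p(\Sigma)$, $C=L^p_+(\Phi)$, $K=L^p_-(\Gamma)\times\{0\}$, the operator $\mathcal A=(\mathcal A_1,\mathcal B)$ with $\mathcal A_1 f=\int_\Phi A(\cdot,x)f(x)\,\diff x$ and $\mathcal B f=\int_\Phi B(\cdot,x)f(x)\,\diff x$, the element $h=(-a,-b)$, and the objective $\langle -c,\cdot\rangle$, most hypotheses are either structural or supplied by assumption. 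First I would record that $X$ and $Y$ are Banach spaces since $1<p<\infty$; that $C$ and $K$ are closed convex cones ($L^p_+$ is norm-closed, and a finite product of closed cones is closed); that $\langle c,\cdot\rangle$ is continuous (assumed, and consistent with $c\in L^q(\Phi)$); and that $\mathcal A$ is continuous (assumed) and genuinely maps $X$ into $Y$ by Assumption~\ref{assumption:mappingToLp}. I would also check that the standing pairing assumption and Assumption~\ref{Proof By Shapiro Results:asmp1} hold here with the reflexive identifications $X^*=L^q(\Phi)$, $Y^*=L^q(\Gamma)\times L^q(\Sigma)$, the adjoint $\mathcal A^*$ being the integral operator obtained from the Fubini exchange already carried out in the derivation of $(D^L)$; this is precisely what makes $(D^L)$ the dual $(D)$ of $(P^L)$, so that $Val(P)=Val(D)$ translates into $Val(P^L)=Val(D^L)$ (up to the harmless sign change coming from $\langle -c,\cdot\rangle$).

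The only substantive point is the regularity condition \eqref{Proof By Shapiro Results:eq16}, namely $-h=(a,b)\in\interier\big(\mathcal A(C)-K\big)$, and the heart of the proof is to deduce it from the Slater condition. Since $-K=L^p_+(\Gamma)\times\{0\}$, one has $\dom v=-\mathcal A(C)+K$ and $\mathcal A(C)-K=\{(\mathcal A_1 f+v,\,\mathcal B f):f\ge 0,\ v\in L^p_+(\Gamma)\}$, so the task is to exhibit a norm-neighbourhood of $(a,b)$ inside this set. The idea is to split the two blocks: the surjectivity hypothesis $\mathcal B(X)=L^p(\Sigma)$ together with the open mapping theorem makes $\mathcal B$ an open map, so for $w$ near $b$ there is a correction $g$ with $\mathcal B g=w-b$ and $\|g\|$ controlled by $\|w-b\|$; setting $f=\bar f+g$ with $\bar f$ the Slater point then restores the equality constraint, $\mathcal B f=w$, while keeping $\mathcal A_1 f$ close to $\mathcal A_1\bar f$. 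In the $\Gamma$-block the slack cone is all of $L^p_+(\Gamma)$, and the strict feasibility $\mathcal A_1\bar f-a<0$ is meant to absorb both the perturbation $u-a$ and the error $\mathcal A_1 g$, yielding $v:=u-\mathcal A_1 f\ge 0$; this would place $(u,w)$ in $\mathcal A(C)-K$, hence $(a,b)\in\interier(\mathcal A(C)-K)$. Feeding this into Proposition~\ref{mainProp} then gives $Val(P^L)=Val(D^L)$ and that $Sol(D^L)$ is nonempty and bounded.

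The main obstacle is exactly this verification of \eqref{Proof By Shapiro Results:eq16}. The convenient textbook shortcuts are unavailable: in $L^p(\Gamma)$ with $p<\infty$ the cone $L^p_+(\Gamma)$, hence $L^p_-(\Gamma)$, has empty interior, and likewise $\interier(C)=\emptyset$, so neither the generalised Slater condition nor the equality-constraint criterion of Section~2.3.4 of \cite{bon-shap} can be applied directly, and one must argue with $\interier(\mathcal A(C)-K)$ by hand. The genuinely delicate step is to turn the \emph{almost everywhere} strict inequality $\mathcal A_1\bar f-a<0$ into a statement about an $L^p(\Gamma)$-neighbourhood of $a$: an $L^p$-small perturbation $u-a$ and the $L^p$-small error $\mathcal A_1 g$ need not be dominated pointwise by the pointwise margin $a-\mathcal A_1\bar f$, so the slack $v\in L^p_+(\Gamma)$ must be used to soak up the non-uniformity (this is where one should exploit that the admissible slack in the $\Gamma$-block is the whole positive cone, and possibly strengthen or reinterpret the Slater inequality as an essential margin). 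The remaining care is routine: ensuring the corrector $f=\bar f+g$ stays in $C=L^p_+(\Phi)$ while meeting the perturbed equality constraint — handled by the open-mapping estimate for $\mathcal B$ — and bookkeeping with Fubini's theorem and the reflexivity of $L^p$.
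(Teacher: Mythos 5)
Your overall route is the same as the paper's: cast $(P^L)$ into the conic framework of Section~\ref{sec:shapiro} with $X=L^p(\Phi)$, $Y=L^p(\Gamma)\times L^p(\Sigma)$, $C=L^p_+(\Phi)$, $K=L^p_-(\Gamma)\times\{0\}$, and invoke Proposition~\ref{mainProp}, the Slater condition standing in for the regularity condition \eqref{Proof By Shapiro Results:eq16}. The gap is that you never actually establish that condition: your argument for $-h=(a,b)\in\interier(\mathcal A(C)-K)$ ends with the admission that the a.e.\ strict inequality $\mathcal A_1\bar f<a$ only gives a pointwise margin which need not dominate $L^p$-small perturbations $u-a$ and $\mathcal A_1 g$, and with the suggestion that one might have to ``strengthen or reinterpret the Slater inequality as an essential margin''. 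A plan that may require modifying the hypothesis is not a verification of the hypothesis, so the decisive step of the proof is missing.

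Moreover, the step you dismiss as routine is not: choosing $g$ with $\mathcal B g=w-b$ and $\|g\|_p$ small via the open mapping theorem does not keep $f=\bar f+g$ in $C=L^p_+(\Phi)$, since $g$ can be very negative on a set of small measure while $\bar f$ has no essential positive lower bound; because $\interier\bigl(L^p_+(\Phi)\bigr)=\emptyset$ for $p<\infty$, the equality-constraint criterion of \cite{bon-shap} (which needs $\bar f\in\interier(C)$) is unavailable, so the $\Sigma$-block suffers from exactly the same obstruction you correctly identified in the $\Gamma$-block, and your proposal resolves neither. In fairness, the paper's own proof is only a one-sentence appeal to Proposition~\ref{mainProp} (``all the assumptions are satisfied'') and does not carry out this verification either, so you have located precisely where the real difficulty lies — all the relevant cones have empty interior in $L^p$ — but as written your argument does not close that step and therefore does not prove the theorem.
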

Now we prove a lemma which ensures that Assumption \ref{assumption:mappingToLp} holds and the required continuity of the linear operators under some conditions:
\begin{lemma}\label{ShapiroProof:Lemma}
Suppose $A:\Gamma\times\Phi\rightarrow \R$ is such that $A(y, \cdot)\in L^q(\Phi)$ a.e. $y$ and $A(\cdot, x)\in L^p(\Gamma)$ a.e. $x$ and for some $1\leq p, q\leq\infty$ such, that $1/p + 1/q = 1$. Further suppose that the functions $\tau(x) := \|A(\cdot, x)\|_p$ and $\rho(y) := \|A(y, \cdot)\|_q$ are in $L^q(\Phi)$ and $L^p(\Gamma)$ respectively, and finally, that $\rho(y)$ is uniformly bounded, i.e. $\exists M$ such that $\rho(y) \leq M$ a.e. $y\in\Gamma$.\\
Then the linear operator $\mathcal A$ defined below is continuous, and its image is in $L^p(\Gamma)$:
\begin{equation}
\mathcal A(f) := \int_{\Phi}A(y,x)f(x)dx
\end{equation}
\end{lemma}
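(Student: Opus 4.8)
The plan is to show directly that $\mathcal A$ is a bounded linear map from $L^p(\Phi)$ into $L^p(\Gamma)$; since boundedness of a linear map between normed spaces is equivalent to continuity, this settles both assertions at once. The argument has only three ingredients: a pointwise H\"older estimate for $\mathcal A f$, the measurability of $\mathcal A f$, and integration of that estimate against $\rho$.

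First I would fix $f\in L^p(\Phi)$ and use that, by hypothesis, $A(y,\cdot)\in L^q(\Phi)$ for a.e.\ $y\in\Gamma$; for such $y$ H\"older's inequality gives
\[
\int_\Phi |A(y,x)\,f(x)|\,dx \;\le\; \|A(y,\cdot)\|_q\,\|f\|_p \;=\; \rho(y)\,\|f\|_p \;<\;\infty ,
\]
so $\mathcal A f(y):=\int_\Phi A(y,x)f(x)\,dx$ is well defined and finite for a.e.\ $y$, with $|\mathcal A f(y)|\le \rho(y)\,\|f\|_p$ a.e. Then I would raise this to the $p$-th power and integrate over $\Gamma$, using the assumption $\rho\in L^p(\Gamma)$, to obtain
\[
\|\mathcal A f\|_{L^p(\Gamma)} \;\le\; \|\rho\|_{L^p(\Gamma)}\,\|f\|_{L^p(\Phi)} ,
\]
which shows simultaneously that $\mathcal A f\in L^p(\Gamma)$ (so the image of $\mathcal A$ lies in $L^p(\Gamma)$) and that $\mathcal A$ is bounded, hence continuous; the uniform bound $\rho\le M$ gives in addition the estimate $\|\mathcal A f\|_{L^\infty(\Gamma)}\le M\|f\|_p$.

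The one step needing care is the measurability of $y\mapsto\mathcal A f(y)$. Since $\Phi\subseteq\R^n$ and $\Gamma\subseteq\R^m$ carry $\sigma$-finite Lebesgue measure, I would approximate $f$ in $L^p(\Phi)$ by simple functions $f_k$ supported on sets of finite measure. For each $k$, $\mathcal A f_k$ is a finite linear combination of maps $y\mapsto\int_E A(y,x)\,dx$ with $\mu(E)<\infty$; restricting to $\Gamma_j\times E$ with $\mu(\Gamma_j)<\infty$, the function $A$ is integrable there (H\"older in $x$ bounds $\int_E|A(y,x)|\,dx$ by a constant times $\rho(y)$, and H\"older in $y$ together with $\rho\in L^p(\Gamma)$ bounds the remaining integral over $\Gamma_j$), so Fubini's theorem makes each such map measurable on $\Gamma_j$, hence on $\Gamma$. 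Thus every $\mathcal A f_k$ is measurable, and since $|\mathcal A f_k(y)-\mathcal A f(y)|\le\rho(y)\|f_k-f\|_p\to 0$ for a.e.\ $y$, $\mathcal A f$ is measurable as an a.e.\ pointwise limit. (Alternatively, for $1<p<\infty$ one may test $\mathcal A f$ against $g\in L^q(\Gamma)$: Fubini applies because $\int_{\Gamma\times\Phi}|A(y,x)f(x)g(y)|\,dx\,dy\le\|\rho\|_p\|g\|_q\|f\|_p<\infty$, exhibiting $\mathcal A f$ through duality $L^q(\Gamma)^\ast=L^p(\Gamma)$, although one still needs the approximation argument to identify the representative with the pointwise formula.)

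The main obstacle is exactly this measurability step: because $\rho$ need not belong to $L^1(\Gamma)$ when $\Gamma$ has infinite measure, one cannot apply Fubini's theorem to $A(y,x)f(x)$ on all of $\Gamma\times\Phi$ in one stroke, and the measurability of $\mathcal A f$ has to be extracted by localising to finite-measure sets (or via the duality device above). Everything else reduces to two applications of H\"older's inequality, so no further difficulty is expected; the hypotheses on $\tau$ are not needed for this lemma but are the symmetric conditions that make the adjoint-type operator $g\mapsto\int_\Gamma A(\cdot,x)g(x)\,dx$ well behaved.
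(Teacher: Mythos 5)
Your proof is correct, but it follows a genuinely different route from the paper's. The paper establishes $\mathcal A f\in L^p(\Gamma)$ via Minkowski's integral inequality, pulling the $L^p(\Gamma)$-norm inside the $x$-integral to arrive at the bound $\bigl(\int_\Phi |f(x)|\tau(x)\,dx\bigr)^p$, which is finite by H\"older because $\tau\in L^q(\Phi)$; it then proves continuity separately by an $\epsilon$-$\delta$ argument using H\"older in $x$ together with the uniform bound $\rho(y)\leq M$. You instead use the single pointwise estimate $|\mathcal A f(y)|\leq\rho(y)\|f\|_p$ and integrate its $p$-th power, using $\rho\in L^p(\Gamma)$ to get the explicit operator-norm bound $\|\mathcal A f\|_{L^p(\Gamma)}\leq\|\rho\|_{L^p(\Gamma)}\|f\\|_{L^p(\Phi)}$, so boundedness and the range statement come out of one Schur-type computation; as you note, this makes the hypotheses on $\tau$ and the uniform bound $M$ superfluous for this lemma, whereas they are exactly what the paper's two-step argument consumes. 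Your version is arguably tighter: the paper's continuity step is imprecise as written (it bounds a norm in $y$ by an expression still depending on $y$, and a sup-bound $M\|f_1-f_2\|_p$ alone does not control the $L^p(\Gamma)$-norm when $\Gamma$ has infinite measure), while your estimate yields continuity directly from linearity plus boundedness. You also supply the measurability of $y\mapsto\mathcal A f(y)$ by approximation with simple functions and localised Fubini, a point the paper passes over in silence; that is a legitimate gap-filling addition rather than a deviation.
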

\begin{proof}
We have
\begin{eqnarray}
&&\int_{\Gamma}\left|\int_{\Phi} A(y, x)f(x)\diff x\right|^p\diff y\nonumber\\ \leq
&&\int_{\Gamma}\left(\int_{\Phi}\left| A(y, x)\right|\left|f(x)\right|\diff x\right)^p\diff y \label{mk_1}\\
\leq &&\left(\int_{\Phi}\left(\int_{\Gamma}\left|A(y, x)\right|^p\left|f(x)\right|^p\diff y\right)^{1/p}\diff x\right)^p \label{mk_2}\\
=&&\left(\int_{\Phi}\left|f(x)\right|\left(\int_{\Gamma}\left|A(y, x)\right|^p\diff y\right)^{1/p}\diff x\right)^p\nonumber\\ 
= && \left(\int_{\Phi}\left|f(x)\right|\tau(x)\diff x\right)^p\leq \infty\nonumber,
\end{eqnarray}
where \eqref{mk_2} follows from \eqref{mk_1} by Minkowski's inequality.\\
Now we prove continuity of $\mathcal A$.\\
For any $\epsilon\geq 0$ take $\delta = \epsilon / M$. Then for any two functions $f_1, f_2\in L^p(\Phi)$ such that $\|f_1 - f_2\|_p \leq \delta$ we have
\begin{eqnarray*}
&&\left\|\int_{\Phi} A(y, x)f_1(x)\diff x - \Irn A(y, x)f_2(x)\diff x\right\|_p =\\
&&\left\|\int_{\Phi} A(y, x)\left(f_1(x) - f_2(x)\right)\diff x\right\|_p \leq\\
&&\left\|f_1 - f_2\right\|_p\rho(y) \leq \epsilon.
\end{eqnarray*}
The last step follows from H\"{o}lder's inequality.
\end{proof}

\newpage
\section{Semi-Infinite Problems with Constraints via Bounds on Integrals of Piece-wise Continuous Functions}\label{semiInf}
In this section we discuss strong duality of semi-infinite programming problems with specific structure.\\
Consider the following optimisation problem over spaces of measures:
\begin{align*}
\text{(P)}\quad\sup_{\calF\in\cM_{\fF}}\,&\int_{\Phi}h(x)\diff \calF(x)\\
\text{s.t. }&\int_{\Phi}\phi_{s}(x)\diff\calF(x)
\leq a_{s},\quad(s=1,\dots,M),\\
\text{s.t. }&\int_{\Phi}\psi_{t}(x)\diff\calF(x)
= b_{t},\quad(t=1,\dots,N),\\
&\calF\geq 0,
\end{align*}
and it's dual problem
\begin{align}
\text{(D)}\quad\inf_{(y,z)\in\R^{M+N+1}}
\,&\sum_{s=1}^{M}a_s y_s+\sum_{t=1}^{N}b_t z_t,\nonumber\\
\text{s.t. }&\sum_{s=1}^{M}y_s\phi_s(x)+\sum_{t=1}^{N}z_t\psi_t(x)-h(x)\geq 0,\quad(x\in\Phi),\label{pos10}\\
&y\geq 0,\nonumber
\end{align}
where $(\Phi,\fF)$ is a complete measure space. Let $\cM_{\fF}$ be the set of signed measures with finite variation on $(\Phi,\fF)$.\\
Shapiro, \cite{shap}, proves that strong duality holds (i.e., $Val(P) = Val(D)$) when $\Phi$ is compact and the functions $h(x)$, $\phi_s(x)$ and $\psi_t(x)$ are continuous. In this section we extend this result to the case where these functions are piecewise continuous on the partitioning of $\Phi$ into boxes $\Phi=\bigcup_{i=1}^k\mathbb B_i$, $\cap_{i=1}^k\mathbb B_i = \emptyset$. Each box $\mathbb B_i\subset \R^n$ has the following form
\begin{equation*}
\mathbb B_i = \left\{x\in\R^n : l_j^i \leq x_j < u_j^i \right\}
\end{equation*}
Suppose that each of the functions $h(x)$, $\phi_s(x)$ and $\psi_t(x)$ is continuous on $\mathbb B_k$, $\forall k\leq K$.\\
We take a similar approach as in \cite{shap}.\\
Note that each box in $\R^n$ can be linearly transformed into a unit box in $\R^n$, so that with the new transformed variables the optimisation problem becomes
\begin{align*}
\text{(P')}\quad\sup_{\calF\in\cM_{\fF}}\,&\sum_{i=1}^K\int_{\mathbb B}h^i(x)\diff \calF^i(x)\\
\text{s.t. }&\sum_{i=1}^K\int_{\mathbb B}\phi_{s}^i(x)\diff\calF_i(x)
\leq a_{s},\quad(s=1,\dots,M),\\
\text{s.t. }&\sum_{i=1}^K\int_{\mathbb B}\psi_{t}^i(x)\diff\calF_i(x)
= b_{t},\quad(t=1,\dots,N),\\
&\calF_i\geq 0, \quad i = 1, \dots, K.
\end{align*}
with the new dual (which, obviously, is equivalent to the original dual problem)
\begin{align}
\text{(D')}\quad\inf_{(y,z)\in\R^{M+N}}
\,&\sum_{s=1}^{M}a_s y_s+\sum_{t=1}^{N}b_t z_t\nonumber\\
\text{s.t. }&\sum_{s=1}^{M}y_s\phi_s^i(x)+\sum_{t=1}^{N}z_t\psi_t^i(x)-h^i(x)\geq 0,\quad(x\in\mathbb B, \quad i = 1, \dots, K),\label{pos10}\\
&y\geq 0,\nonumber
\end{align}
where $\phi_s^i(x) = \phi_s|_{\mathbb B_i}(T_i^-(x))$ and $T_i:\mathbb B_i\rightarrow\mathbb B$ is the linear map of the transformation ($T_i^-$ being it's inverse map).\\
Let $X:= \R^{M+N}$ and $Y:=\times_{i=1}^K \mathcal C(\mathbb B)$, where $\mathcal C(\mathbb B)$ is the set of continuous functions on $\mathbb B$. Then the problem $(D')$ can be written as
\begin{align}\label{semi-inf_gen}
\inf_{(y, z)\in\R^M_+\times\R^N}&\sum_{s=1}^{M}a_s y_s+\sum_{t=1}^{N}b_t z_t\nonumber\\
\text{s.t. }& \mathcal A\cdot(y, z) + b\in \mathcal K,
\end{align}
where $\mathcal A:X\rightarrow Y$ is defined as
\begin{equation}
\mathcal A\cdot(y, z) = \left(\tau_1(y, z, x), \dots, \tau_K(y, z, x)\right),
\end{equation}
$\tau_i(y, z, x) = \sum_{s=1}^{M}y_s\phi_s^i(x)+\sum_{t=1}^{N}z_t\psi_t^i(x)$ and $b$ is defined as vector
\begin{equation}
b = \left(h^i(x), \dots, h^K(x)\right).
\end{equation}
For cone $\mathcal K$ we have $\mathcal K = \times_{i=1}^K \mathcal C_+(\mathbb B)$, where $\mathcal C_+(\mathbb B)$ is the set of non-negative continuous functions on $\mathbb B$.\\
The dual space $Y^*$ of $Y$ is $\times_{i=1}^K\mathcal M$, where $\mathcal M$ is the set of finite signed Borel measures on $\mathbb B$. By equipping $Y$ and $Y^*$ with the strong and weak topologies respectively, and by defining the scalar product between $Y$ and $Y^*$ as
\begin{equation}
\langle \phi, \mu\rangle := \sum_{i=1}^K\int_{\mathbb B}\phi_i(x)\diff \mu_i(x),
\end{equation}
we obtain a pair of locally convex topological vector spaces.\\
It is easy to see that the Lagrangian dual of the Problem 
(\ref{semi-inf_gen}) coincides with the problem $(P')$. The following proposition is a direct result of Proposition \ref{mainProp}.
\begin{proposition}
Suppose that the optimal value of the problem $(D)$ is finite. If there exists $(\bar y, \bar z)\in\R^M\times\R^N$ such that
\begin{equation}
\sum_{s=1}^{M}\bar y_s\phi_s(x)+\sum_{t=1}^{N}\bar z_t\psi_t(x)-h(x)> 0, \quad \forall x\in \Phi,
\end{equation}
then $Val(P) = Val(D)$ and the set of optimal solution of the problem $(P)$ is bounded.
\end{proposition}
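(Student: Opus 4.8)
The plan is to deduce the statement from Proposition \ref{mainProp} applied to the conic linear program (\ref{semi-inf_gen}), keeping in mind that the problem written there in conic form is the dual $(D')$, so that the roles of ``primal'' and ``dual'' from Section \ref{sec:shapiro} are here played by $(D')$ and $(P')$ respectively. First I would record the chain of equivalences already set up above: the box-wise affine changes of variable $T_i$ turn $(P)$ into $(P')$ and $(D)$ into $(D')$; the latter is exactly the conic program (\ref{semi-inf_gen}) with $X=\R^{M+N}$, $Y=\times_{i=1}^{K}\mathcal C(\mathbb B)$, cones $C=\R^M_+\times\R^N$ and $\mathcal K=\times_{i=1}^{K}\mathcal C_+(\mathbb B)$, linear map $\mathcal A$, and right-hand side $b$; and the Lagrangian dual of (\ref{semi-inf_gen}) is $(P')$. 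Hence it suffices to verify the hypotheses of Proposition \ref{mainProp} for (\ref{semi-inf_gen}): its conclusion $Val(P)=Val(D)$ together with ``$Sol(D)$ nonempty and bounded'' then translates, after undoing the role-swap and the changes of variable, into $Val(D)=Val(P)$ and boundedness of the (nonempty) set of optimal solutions of $(P)$.

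Next I would check the structural hypotheses, all routine: $\R^{M+N}$ with the Euclidean norm and $\times_{i=1}^{K}\mathcal C(\mathbb B)$ with the supremum norm are Banach spaces; $C$ and $\mathcal K$ are closed convex cones (a uniform limit of non-negative continuous functions is again non-negative and continuous); the objective $(y,z)\mapsto\sum_s a_s y_s+\sum_t b_t z_t$ is a continuous linear functional; and $\mathcal A$, being linear on a finite-dimensional domain, is automatically continuous (indeed $\|\mathcal A(y,z)\|_\infty\le\sum_s|y_s|\,\|\phi_s\|_\infty+\sum_t|z_t|\,\|\psi_t\|_\infty$, the sup-norms being finite since the $\phi_s,\psi_t$ are continuous on the box). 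Finiteness of the optimal value of (\ref{semi-inf_gen}), i.e.\ of $Val(D)$, is part of the hypothesis.

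The one step that actually uses the structure is the verification of the regularity condition (\ref{Proof By Shapiro Results:eq16}) in its generalized Slater form. Since $\mathbb B$ is compact, $\interior\mathcal C_+(\mathbb B)=\{\varphi\in\mathcal C(\mathbb B):\varphi>0\text{ on }\mathbb B\}$ (a strictly positive continuous function on a compact set is bounded below by a positive constant), which is non-empty; hence $\mathcal K$ has non-empty interior and, by the remark following Proposition \ref{mainProp}, (\ref{Proof By Shapiro Results:eq16}) is equivalent to the existence of $(\bar y,\bar z)\in C$ with $\mathcal A(\bar y,\bar z)+b\in\interior\mathcal K$, i.e.\ with $\tau_i(\bar y,\bar z,x)-h^i(x)>0$ for every $x\in\mathbb B$ and every $i\le K$. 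Because each $T_i$ maps $\mathbb B_i$ onto $\mathbb B$ and $\Phi=\bigcup_{i}\mathbb B_i$, this family of inequalities over the $K$ unit boxes is, after undoing the transformations $T_i$, precisely the single inequality $\sum_s\bar y_s\phi_s(x)+\sum_t\bar z_t\psi_t(x)-h(x)>0$ for all $x\in\Phi$ of the hypothesis (with $\bar y\ge 0$, so that $(\bar y,\bar z)\in C$). Thus the hypothesis supplies exactly a generalized Slater point.

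With all the hypotheses of Proposition \ref{mainProp} in force, I conclude that the optimal value of (\ref{semi-inf_gen}) equals that of its Lagrangian dual and the latter's solution set is nonempty and bounded; reading this back through $(D')\sim(D)$ and $(P')\sim(P)$ gives $Val(P)=Val(D)$ and boundedness of $Sol(P)$. The main thing to be careful about is not an analytic difficulty but the bookkeeping of the role-swap --- that the conic-form problem here is the dual, so that the ``primal-side'' conclusion of Proposition \ref{mainProp} is what produces the stated properties of $(P)$ --- together with the interior computation for $\mathcal C_+(\mathbb B)$, which relies on compactness of $\mathbb B$.
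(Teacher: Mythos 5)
Your proposal is correct and follows exactly the route the paper intends: the paper states the proposition as ``a direct result of Proposition \ref{mainProp}'', and you supply precisely that application --- conic formulation of $(D')$ over $X=\R^{M+N}$, $Y=\times_{i=1}^K\mathcal C(\mathbb B)$, verification of the Banach/closedness/continuity hypotheses, and the generalized Slater condition obtained from the strict inequality in the hypothesis via the nonempty interior of $\mathcal C_+(\mathbb B)$ on the compact box. Your careful bookkeeping of the primal/dual role-swap (so that the conclusion about $Sol$ of the Lagrangian dual of (\ref{semi-inf_gen}) is what yields the stated properties of $(P)$) is exactly the point the paper leaves implicit, and your filling-in is faithful to it.
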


%\addcontentsline{toc}{section}{Bibliography}
%uncomment next line to change bibliography name to references
%\renewcommand{\bibname}{References}

\newpage
%\bibliography{refs}        %use a bibtex bibliography file refs.bib
%\bibliographystyle{plain}  %use the plain bibliography style

\end{document}